\newtheorem{theorem}{Theorem}[section]
\theoremstyle{remark}
\newtheorem*{remark}{Remark}
\theoremstyle{definition}
\newcommand{\defeq}{\vcentcolon=}
\newcommand{\diff}[1]{\,\mathrm{d}#1}
\newcommand{\abs}[1]{\left|#1\right|}
\newcommand{\set}[1]{\left\{#1\right\}}
\newcommand{\norm}[1]{\left\|#1\right\|}
\newcommand{\inner}[2]{\left(#1\cdot#2\right)}
\title[Measurable Semigroup of the Heat Flow for Harmonic Maps]{Measurable Semigroup Selection of the Heat Flow for Harmonic Maps}
\author{Jorge E. Cardona}
\begin{document}

\address{\parbox{\linewidth}{
    \textsc{Jorge E. Cardona} \\
    Friedrich-Schiller-Universität Jena \\
    Fakultät für Mathematik und Informatik, \\
    Institut für Mathematik\\
    Ernst-Abbe-Platz 2, 07743 Jena, Germany\\
  }}
\email{jorge.cardona@uni-jena.de}

%  Math Subject Classifications
%  \subjclass[2010] {Primary: 35Q99,28B20, 58E20; Secondary:35B99, 82D99}
\date{\today}
\keywords{harmonic maps, heat flow, semiflow selection, Landau-Lifshitz}

\begin{abstract}
  J.-M.~Coron proved in \cite{Coron1990NonuniquenessFT} that the global weak solutions of the heat flow from $M$ to $N$, starting at non-stationary weakly harmonic maps, are not unique when $M=B^3$ and $N=S^2$. Hence, the semigroup property of the solution map does not hold in general. The present short paper uses the techniques developed by J.~Cardona and L.~Kapitanski to show the existence of infinitely many measurable semigroups solving the heat flow in the same cases where non-uniqueness was shown by J.-M.~Coron.
\end{abstract}

\maketitle

\section{Introduction}

Let $(M, g)$ and $(N, h)$ be two Riemannian manifolds.
Following \cite[Chapter 9]{Jost2017} and \cite[Section 3.4]{Eells1978ARO}, the \textit{energy density} of a smooth map $u: M \to N$ at $x \in M$ is the quantity $$e(u)(x) = \frac12 \abs{du(x)}^2\,,$$ where $du(x)$ is the differential of $u$ at $x$, and the norm is the one in the tensor product $T^*_xM \otimes T_{u(x)}N$.
In local coordinates $(x^1, \dots, x^m)$ on $M$ and $(y^1, \dots y^n)$ on $N$ the energy density is $$e(u)(x) = g^{\alpha \beta}(x) h_{ij}(u(x)) \pdv{u^i(x)}{x^\alpha} \pdv{u^j(x)}{x^\beta}\,,$$ where the usual summation convention is used, and the Latin indices run over $1, \dots, n$ and the Greek indices over $1, \dots, m$.

The \textit{energy of the map $u$} is the quantity $$E(u) = \int_M e(f) \diff{M}\,,$$ where $\diff{M}$ is the volume element in $M$.
The Euler-Lagrange equation associated to the functional $E$ is
\begin{equation}
  \label{eq:euler-lagrange-energy}
  \Delta_M u^i(x) + g^{\alpha \beta}(x) \Gamma^i_{jk}(u(x)) \pdv{u^j(x)}{x^\alpha} \pdv{u^k(x)}{x^\beta} = 0
\end{equation}
where $\Delta_M$ is the Laplace-Beltrami operator on $M$, i.e., $$\Delta u^i(x) = \frac{1}{\sqrt{\abs{g}}} \pdv*{\left(\sqrt{\abs{g}} g^{\alpha \beta} \pdv{u^i(x)}{x^\beta}\right)}{x^\alpha}\,,$$ and $\Gamma^i_{jk}(u(x))$ are the Christoffel symbols of the metric $h$ at $u(x)$.

A map $u \in C^2(M, N)$ is said to be \textit{harmonic} if it satisfies (\ref{eq:euler-lagrange-energy}).

A central question in the study of harmonic maps is the following: Given an arbitrary smooth map $u_0: M \to N$, is it possible to deform it into an harmonic map? J.~Eells and J.H.~Sampson~\cite{Eells1964} proved that, if $\text{dim}(M) > 2$ and $N$ has a non-positive sectional curvature, the heat-flow associated to the functional $E$, i.e., the Cauchy problem
\begin{subequations}
  \label{eq:heat-flow}
  \begin{alignat}{1}
    \partial_t u - \Delta_M u & = \Gamma_N(u)(\nabla u, \nabla u)_M  \\
    u(0, x) &= u_0
  \end{alignat}
\end{subequations}
has a global and regular solution $u(t, x)$ which converges to a harmonic map as $t \to \infty$ (where $\Gamma_N(u)(\nabla u, \nabla u)_M$ is the second term in (\ref{eq:euler-lagrange-energy}).)
M.~Struwe~\cite{Struwe1985} considered global weak solutions of~(\ref{eq:heat-flow}) to extend the results of Eells and Sampson to arbitrary $N$ when $\operatorname{dim}(M) = 2$.

A map $u \in H^1_\text{loc}(M, N)$ is said to be \textit{weakly harmonic} if it satisfies (\ref{eq:euler-lagrange-energy}) in the weak sense, i.e.,
$$\int_M g^{\alpha \beta}(x) h_{ij}(u(x)) \pdv{u^i(x)}{x^\alpha}\left(\pdv{\eta^j(x)}{x^\beta} - \eta^k(x) \pdv{u^l(x)}{x^\beta} \Gamma^j_{lk}(u(x))\right) \diff{M} = 0$$ for every smooth map $\eta: M \to TN$ such that $\eta(x) \in T_{u(x)}N$.

Y.~Chen \cite{Chen1989TheWS} proved the existence of global weak solution of (\ref{eq:heat-flow}) when $N$ is a sphere and $M$ is a compact and smooth manifold without boundary and with $\operatorname{dim}(M) > 2$. J.-M. Coron~\cite{Coron1990NonuniquenessFT} and later F. Béthuel, J.-M. Coron, J.-M. Ghidaglia, and A. Soyeur \cite{Bethuel1992} proved the existence of infinitely many global weak solutions for some initial conditions in the case $M = B^3$ and $N=S^2$. In particular, the initial conditions they consider are weakly harmonic maps that fail to be stationary points of the energy functional. A map $u$ is stationary if it is invariant under variations on the spatial domain, i.e.,
$$\left. \frac{d}{d\varepsilon} E(u_{\varepsilon\eta}) \right|_{\varepsilon = 0} = 0 \text{ for every smooth vector field } \eta$$
where $u_{\varepsilon\eta}(x) = u(\operatorname{exp}_x(\varepsilon \eta(x)))$ and $\operatorname{exp}_x$ is the exponential map on $M$ at $x$.

The non-uniquenes result in \cite{Bethuel1992} begs the question: Is there a semigroup solving the heat-flow? In general, if the solutions of an evolution equation are unique the existence of a semigroup is warranted. Due to the non-uniqueness result in the case $M=B^3$ and $N=S^2$, is not immediately obvious that such a semigroup exists. The main result in this paper is the existence of an infinite number of semigroups solving the heat flow in the same case considered in \cite{Bethuel1992}.  The main tool is the measurable semigroup selection thereom developed by J.~Cardona and L.~Kapitanski~\cite{Cardona2020} in the same spirit of the Markov selection theorem of N.V.~Krylov~\cite{Krylov1973}.

\subsection{Notation}

In what follows, $M = B^3 \defeq \set{x \in \mathbb{R}^3 : \abs{x} < 1}$ and $N= \partial B^3 \subset \mathbb{R}^3$, both are considered as sub-manifolds of $\mathbb{R}^3$. A map $u: M \to N$ is simply a map $u: B^3 \to \mathbb{R}^3$ with the constraint $\abs{u(x)} = 1$ for every $x \in B^3$.
The energy of a smooth map $u: M \to N$ is the quantity $$E(u) = \frac12 \norm{u}^2_{H^1} = \frac12 \sum_{i=1}^3 \int_{B^3} \abs{\nabla u^i(x)}^2 \diff{x}\,,$$ and the space of measurable functions with finite energy is denoted by $H^1(M,N)$ (or simply $H^1$) endowed with the norm $\norm{\cdot}_{H^1}$.
Since $\partial B^3$ is a $C^1$ surface, the trace operator $T: H^1 \to L^2(\partial B^3)$ is bounded and linear.

A map $u \in C^2$ is \textit{harmonic} if and only if it satisfies
\begin{equation}
  \label{eq:harm-simple}
  \Delta u + u \abs{\nabla u}^2 = 0\,.
\end{equation}
A map $u \in H^1$ is weakly harmonic if and only if (\ref{eq:harm-simple}) holds in the weak sense.
A weakly harmonic map $u$ is stationary if and only if $$\sum_{j=1}^3 \pdv*{\abs{\pdv{u(x)}{x^j}}^2}{x^k} - 2  \pdv*{\inner{\pdv{u(x)}{x^j}}{\pdv{u(x)}{x^k}}}{x^j} = 0 \quad \forall k=1,2,3\,.$$

Functions from $[0, \infty) \times B^3$ to $S^2$ are seen as paths from $[0, \infty)$ taking values in some functional space. The space of paths taking values in $H^1$ with uniform bounds is denoted by $L^\infty([0, \infty); H^1)$. The space of paths taking values in $L^2$ and continuous with respect to its strong topology is denoted by $C([0, \infty); L^2)$. Finally, the space of paths taking values in $H^1$ and continuous with respect to its weak topology is denoted by $C_w([0, \infty); H^1)$.

For the definitions and notation of set-valued analysis we refer to \cite{Cardona2020} and reference therein.

\section{Measurable semigroups of the heat-flow}

The heat flow reads
\begin{subequations}
  \label{eq:heat-flow-simple}
  \begin{alignat}{1}
    \partial_t u - \Delta u & = u \abs{\nabla u}^2 \\
    u(0) &= u_0
  \end{alignat}
\end{subequations}

Following \cite{Chen1989TheWS} and \cite{Bethuel1992}, a \textit{global weak solution} of (\ref{eq:heat-flow-simple}), started at $u_0 \in H^1$, is a measurable map $u$ from $[0, \infty) \times B^3$ to $\mathbb{R}^3$ that satisfies the following
\begin{enumerate}[i)]
\item $u$ takes values in $S^2$, i.e. $\abs{u(t, x)} = 1$ for a.e. $(t,x) \in [0, \infty) \times M$;
\item $u$ satisfies the initial condition $u(0) = u_0$ and the boundary condition $T u(t) = T u_0$ for every $t \geq 0$;
\item $u \in L^\infty([0, \infty); H^1) \cap C([0, \infty); L^2) \cap C_w([0, \infty); H^1)$;
\item the equation (\ref{eq:heat-flow-simple}) holds in the weak sense; and
\item the following energy inequality holds $$E(u(t + s)) + \int_t^{t+s} \int_M \abs{\partial_t u(\tau)}^2 \diff{M} \diff{\tau} \leq E(u(t))$$ for every $s \geq 0$ and almost every $t \geq 0$ including $t=0$.
\end{enumerate}

\begin{remark}
  Note that v) is usually written only with $t=0$. In general, it is not possible to obtain an inequality valid for every $t \geq 0$ due to the mode of convergence of the limit arguments in the constructions of solutions in \cite[Eqs. 2.19-2.22]{Chen1989TheWS} or \cite[Eqs. 1.8,1.9, and 1.12]{Bethuel1992}. See \cite[Section 4.4]{Cardona2020} for a similar situation with the Navier-Stokes system.
\end{remark}

\begin{theorem}\label{thm:main}
  There exist infinitely many measurable maps $$u: H^1 \to C_w([0, \infty), H^1)$$ such that $u(a)$ is a global weak solutions of (\ref{eq:heat-flow-simple}) started at $a \in H^1$, such that $u(a, 0) = a$ and $u(a, t + s) = u(u(a, t), s)$ for every $s \geq 0$ and almost every $t \geq 0$ (including $t = 0$).
\end{theorem}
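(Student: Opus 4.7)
The plan is to apply the measurable semigroup selection theorem of \cite{Cardona2020} to the set-valued map $\Phi: H^1 \to 2^{C_w([0,\infty), H^1)}$ that assigns to each $a \in H^1$ the set $\Phi(a)$ of all global weak solutions of~(\ref{eq:heat-flow-simple}) starting at $a$ in the sense of i)--v). The construction splits into three stages: verifying the axioms of the abstract selection theorem for $\Phi$, extracting a single measurable semigroup selection, and then producing infinitely many distinct such selections by exploiting the non-uniqueness established in \cite{Coron1990NonuniquenessFT,Bethuel1992}.

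For the first stage I would check three kinds of properties of $\Phi$. Non-emptiness of each $\Phi(a)$ is immediate from \cite{Chen1989TheWS,Bethuel1992}. Sequential compactness of $\Phi(a)$ in $C_w([0,\infty), H^1)$ follows from the uniform energy bound $\sup_{t \geq 0} E(u(t)) \leq E(a)$ granted by v), the $L^2([0,\infty) \times M)$ bound on $\partial_t u$, the Rellich embedding $H^1 \hookrightarrow L^2$, and a diagonal argument that passes the nonlinearity and the constraint $\abs{u} = 1$ to the limit along the lines of \cite[Section 1]{Bethuel1992}. Borel measurability of the graph of $\Phi$ in $H^1 \times C_w([0,\infty), H^1)$ follows because each of conditions i)--v) can be rewritten as a countable intersection of Borel conditions indexed by a dense family of test functions at rational times.

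The structural axioms required by the selection theorem are the shift property that $u \in \Phi(a)$ implies $u(t+\cdot) \in \Phi(u(t))$ for almost every $t \geq 0$, and the concatenation property that if $u \in \Phi(a)$ and $v \in \Phi(u(t))$ for such a $t$, then the path that equals $u$ on $[0,t]$ and $v(\cdot - t)$ on $[t,\infty)$ lies in $\Phi(a)$. Both follow from the autonomous character of~(\ref{eq:heat-flow-simple}) and from the exact almost-every formulation of the energy inequality in v); the exceptional null set for a given $u$ is precisely the one appearing in v). Weak continuity across the gluing time is automatic since $v(0) = u(t)$ in $H^1$. With these axioms verified, the theorem of \cite{Cardona2020} yields a Borel measurable map $u: H^1 \to C_w([0,\infty), H^1)$ with $u(a) \in \Phi(a)$ and the desired almost-everywhere semigroup identity.

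Finally, to produce infinitely many selections I would exploit the fact that the selection in \cite{Cardona2020} is obtained through countably many stages of minimization of auxiliary functionals of the form $u \mapsto \int_0^\infty e^{-\lambda t} F(u(t)) \diff{t}$ over the set-valued restriction. Since \cite{Coron1990NonuniquenessFT,Bethuel1992} provide initial data admitting several weak solutions with strictly different energy histories, replacing $F$ by $F + \varepsilon G$ for suitable $G$ distinguishing those solutions selects different minimizers at those data and, by measurability, on a Borel neighborhood, yielding a continuum of distinct semigroups and in particular infinitely many. The main obstacle I expect is not any single step in isolation, but verifying the concatenation property compatibly with the $C_w([0,\infty), H^1)$ regularity at the gluing time, and then certifying that the branched selections remain genuinely distinct rather than coinciding up to an $H^1$-null set.
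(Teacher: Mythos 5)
Your proposal follows essentially the same route as the paper: the same set-valued solution map, the same compactness, measurability, and concatenation verifications feeding into the selection theorem of \cite{Cardona2020}, and the same mechanism (modifying the first discounted functional $\int_0^\infty e^{-\lambda t}F(u(t))\diff{t}$ in the refinement scheme) to produce distinct selections. The one place the paper is more concrete than your sketch is the distinctness step: it fixes a non-stationary weakly harmonic $a$, uses the constant solution $u_1(t)=a$ together with a weakly continuous $\varphi$ attaining its maximum at $a$ over the energy ball (available since the weak topology of $H^1$ is metrizable on bounded sets), so that refining first by $I_{\lambda,\varphi}$ versus $I_{\lambda,-\varphi}$ provably yields selections that differ at $a$.
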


\begin{proof}
  For every $a \in H^1$, let $S_a \subset C_w([0, \infty); H^1)$ be the set of all the weak global solutions of~(\ref{eq:heat-flow-simple}). We know this set is not empty from Theorem 1 in \cite{Bethuel1992}. The arguments to pass to the limits in Theorem 1 in \cite{Bethuel1992} ensure that the set-valued map $a \to S_a$ is upper-semicontinuous, hence measurable. Moreover, the same arguments ensure that the set $S_a$ is compact in the topology of $C_w([0, \infty); H^1)$. So, the set-valued map $a \mapsto S_a$ is valued in the non-empty compact subsets of $C_w([0, \infty); H^1)$.

  Let $u \in S_a$, and let $t$ be such that the energy inequality is valid. Is not hard to see that for every $v \in S_{u(t)}$ the map $$w(s) = \begin{cases}u(s) & \text{ for } s \leq t \\ v(s - t) & \text{ for } s > t\end{cases}$$ is an element in $S_a$. Theorem 2.5 in \cite{Cardona2020} ensures the existence of a measurable semigroup.

  Finally, we need to show that Theorem 2.5 in \cite{Cardona2020} yield infinitely many measurable semigroup. Let $a \in H^1$ be a weakly harmonic map that is not stationary. It was shown in \cite{Bethuel1992} that (\ref{eq:heat-flow-simple}) has at least two global weak solutions $u_1$ and $u_2$, moreover, one of them is constant $u_1(t) = a$.
Recall that the weak topology of $H^1$ is metrizable on the closed ball of radius $2\norm{a}$, hence, there is a weakly continuous function $\varphi: H^1 \to \mathbb{R}$ satisfying $\varphi(a) = \max \set{ \varphi(v) : v \in H^1 \text{ with } \norm{v} \leq \norm{a}}$. Since the two continuous maps $t \mapsto \varphi(u_1(t))$ and $t \mapsto \varphi(u_2(t))$ are different, there exists $\lambda > 0$ such that $I_{\lambda, \varphi}[u_1] > I_{\lambda, \varphi}[u_2]$ where $$I_{\lambda, \varphi}[u] = \int_0^\infty e^{- \lambda t } \varphi(u(t)) \diff{t}\,.$$ By ensuring that the first functional used to refine the set-valued maps in the proof of Theorem 2.5 in \cite{Cardona2020} is either $I_{\lambda, \varphi}$ or $I_{\lambda, -\varphi}$ the resulting semigroup would be different. Hence, different enumerations of the family of separating functions used in \cite{Cardona2020} results in different semigroups.
\end{proof}

\subsection{An Example: Landau-Lifshitz equations}

The system of Landau-Lifshitz equations $$\partial_t u = u \times \Delta u - \lambda u \times (u \times \Delta u)\quad \abs{u} = 1\,$$ describing the evolution of spin fields in continuum ferromagnetism enjoys similar bounds and it was proven by F. Alouges and A. Soyeur \cite{Alouges1992} that weak solutions are also non-unique using the same method as J.-M. Coron \cite{Coron1990NonuniquenessFT}. Theorem \ref{thm:main} ensures the existence of infinitely many measurable semigroups solving the Landau-Lifshitz equations.

\printbibliography

\end{document}